\newcommand{\ints}{\mathbb{N}}
\newcommand\intrng[2]{\ints_{[#1,#2]}}
\newcommand{\mc}[1] {\mathcal{#1}}
\newcommand{\reals}[0] {\mathbb{R}}
\newcommand{\naturals}[0] {\mathbb{N}}
\newcommand{\posints}[0] {\mathbb{Z}_{+}}
\newcommand{\ones}{\mathbf{1}}
\newtheorem{ass}{Assumption}
\newtheorem{cor}{Corollary}
\newtheorem{lem}{Lemma}
\newtheorem{thm}{Theorem}
\newtheorem{rem}{Remark}
\newtheorem{prp}{Proposition}
\title{\LARGE \bf
A Computationally Governed Log-domain Interior-point Method for Model Predictive Control
}
\author{Jordan Leung$^{1}$, Frank Permenter$^{2}$, and Ilya V. Kolmanovsky$^{1}$
\thanks{Toyota Research Institute (TRI) provided funds to assist the authors with their research but this article solely reflects the opinions and conclusions of its authors and not TRI or any other Toyota entity. The third author acknowledges support by the National Science Foundation
award number CMMI-1904394.}
\thanks{$^{1}$ J. Leung and I. Kolmanovsky are with the University of Michigan, Ann Arbor, MI 48109, USA. Email: \texttt{\{jmleung, ilya\}@umich.edu}. $^{2}$ F. Permenter is with the Toyata Research Institute. Email:\texttt{frank.permenter@tri.global}.}}
\begin{document}

\maketitle
\thispagestyle{empty}
\pagestyle{empty}


\begin{abstract}
This paper introduces a computationally efficient approach for solving Model Predictive Control (MPC) reference tracking problems with state and control constraints. The approach consists of three key components: First, a log-domain interior-point quadratic programming method that forms the basis of the overall approach; second, a method of warm-starting this optimizer by using the MPC solution from the previous timestep; and third, a computational governor that bounds the suboptimality of the warm-start by altering the reference command provided to the MPC problem. As a result, the closed-loop system is altered in a manner so that MPC solutions can be computed using fewer optimizer iterations per timestep. In a numerical experiment, the computational governor reduces the worst-case computation time of a standard MPC implementation by 90\%, while maintaining good closed-loop performance.
\end{abstract}

\section{Introduction}

Model Predictive Control (MPC) is a feedback strategy defined by the solution of a receding horizon Optimal Control Problem (OCP). MPC is widely used in both industrial and academic settings since it provides  high-performance control  while directly accounting for constraints. Additionally,  a wide body of literature on the stability and robustness  of MPC is available   \cite{Rawlings2009_MPCBook,borelli_BBMBook,kouvaritakis2016model,goodwin_ConstrainedControl}. Unfortunately, implementing MPC can be challenging since a constrained OCP must  be solved at every timestep. The development of efficient methods for solving these OCPs has helped address this challenge \cite{Domahidi2013_SOCPSolver,Patrinos2014_APGM,Liao2020_FBStab,frison2020_HPIPM}, but this still remains an open problem in applications with fast sampling rates and/or limited computing power.  

A common approach to reduce the computational burden of MPC is to approximate the OCP solutions by performing a limited number of optimizer iterations per timestep - a procedure referred to as suboptimal MPC. While results pertaining to the stability and robustness of such methods do exist \cite{diehl2005_nominalstabofRTI,liao2019time,zanelli2020lyapunov}, computable certification bounds are limited to simple cases (e.g. input constrained linear systems  \cite{Liao-2020_LQMPCISS,leung2021_TDMPC-ROA_ACC,richter2011computational}) or require significant modification of the OCP (e.g. constraint tightening \cite{rubagotti2014_stabilizingLMPC}) to guarantee stability. 

Anytime MPC methods are a class of suboptimal strategies that ensure stabilizing solutions can be computed under arbitrary time constraints. The approach in \cite{Zeilinger2014_robustMPC} ensures that a specified warm-starting scheme always decreases a Lyapunov function through the addition of the so-called Lyapunov constraint to the OCP. Another approach uses relaxed barrier functions to ensure anytime stability at the cost of bounded constraint violation \cite{Feller2017_stabilizingIterations}.  Alternatively, the approach in \cite{bemporad2015_anytimeMPC} guarantees convergence to a terminal set by solving convex feasibility problems.

This paper introduces an alternative approach for achieving computationally efficient MPC. The  main novelty introduced in this paper is a computational governing scheme that alters the MPC reference command so that the  suboptimality of the interior-point optimizer initialization is bounded. In other words, the proposed strategy modifies the MPC problem to ensure that nearly optimal solutions can be computed using fewer interior-point optimizer iterations per timestep.

The paper is organized as follows. Section~\ref{sec:problemSetting} introduces the tracking MPC formulation used to generate control inputs. Section~\ref{sec:LDIPM} provides a brief overview of the log-domain interior-point method (LDIPM) used for optimization, while Section~\ref{sec:warmStart} describes how the LDIPM can be used to solve MPC problems. Section~\ref{sec:compGov} introduces the computational governing strategy and Section~\ref{sec:examples} demonstrates the efficacy of the computational governor through numerical experiments. 

\textbf{Notation:}    Let  $\mathbb{R}^{n}_{>0}$ denote the set of real  $n \times 1$ vectors,  with strictly positive elements (define  $\mathbb{R}^{n}_{\geq0}$ accordingly). Let $\posints = \mathbb{Z}_{\geq 0}$ represent the set of non-negative integers.  Given $a,b >0$, let $\naturals_{[a,b]} = \naturals \cap [a,b]$. Given $x \in \reals^n$ and $W \in \reals^{n\times n}$ with $W \succ 0$, let $\|x\|_W = \sqrt{x^T W x}$ denote the $W$-norm. Let $\| \cdot \|$  represent the 2-norm  when no subscript is specified. Let  $\lambda_-(A)$ and $\lambda_+(A)$ denote the minimum and maximum eigenvalues of $A\in\reals^{n\times n}$.  Given $x \in \reals^n$ and $y \in \reals^m$, let $(x~\circ~y) \in \reals^n$ denote the elementwise multiplication, and  $e^{x}$ and  $x^{-1}$ denote the elementwise exponentiation and inverse. Let $(x,y) = [x^T \ y^T]^T$. Given $x \in \reals^n$, let $\mathrm{diag}(x) = \mathrm{diag}(x_1,...,x_n)$ denote the $n\times n$ diagonal matrix containing $x_i$ in the $i^{\mathrm{th}}$ diagonal element $\forall i \in \naturals_{[1,n]}$.

\section{Problem Setting}
\label{sec:problemSetting}

Consider the following Linear Time Invariant (LTI) system 
\vspace{-\baselineskip}
\begin{subequations} \label{eq:LTI_System} 
\begin{align}
    x_{k+1} &= A x_k + B u_k, \\
    y_k &= C x_k + D u_k, \\
    z_k &= E x_k + F u_k,
\end{align}
\end{subequations}
where $k \in \posints$ is the discrete-time index, $x_k \in \reals^n$ is the state, $u_k \in \reals^{n_u}$ is the control, $y_k \in \reals^{n_y}$ is the constrained output, and $z_k \in \reals^{n_z}$ is the tracking output. The control objective is to drive the tracking output $z_k$ to a desired reference $r \in \reals^{n_z}$ subject to pointwise-in-time constraints 
\begin{equation} \label{eq:constraint}
    y_k \in \mathcal{Y}, \ \forall k \in \posints ,
\end{equation}
where $\mathcal{Y} \subseteq \reals^{n_y}$ is a specified constraint set. 
\begin{ass} \label{ass:AB-Stab}
   The pair $(A,B)$ is stabilizable and the constraint set $\mathcal{Y} = \{ y ~|~ Yy \leq h \}$ is a compact polyhedron that contains the origin in its interior.
\end{ass}

Equilibria of \eqref{eq:LTI_System} satisfy $Z [x^T \ u^T \ z^T]^T = 0$, where
\begin{equation}
    Z = \begin{bmatrix}
    A-I &  B &  0 \\
    E & F & -I
    \end{bmatrix},
\end{equation}
and these equilibria can be parameterized by a reference command $v \in \reals^{n_v}$ according to $(\bar{x}_v, \bar{u}_v, \bar{z}_v) = Gv$, where  $G^T = [G_x^T \ G_u^T \ G_z^T]^T$ is a basis for the nullspace of $Z$ and Assumption~\ref{ass:AB-Stab} ensures that $\mathrm{Null}(Z) \neq \{ 0 \}$ \cite{Skibik2021_feasibilityGov,Limon2008_trackingMPC}. The following assumption excludes ill-posed reference tracking problems, e.g., $G_z$ = 0, and ensures that the reference uniquely determines the target equilibrium.
\begin{ass} \label{ass:GZ-Nonsingular}
    The matrix $G_z$ is full rank and $n_z = n_v$. 
\end{ass}

\begin{rem}
    The results herein could be extended to the case where $G_z$ is full rank and $n_z < n_v$ \cite{Skibik2021_feasibilityGov}. Assumption~\ref{ass:GZ-Nonsingular} is made instead to simplify the approach.
\end{rem}

The reference tracking MPC strategy of \cite{Limon2008_trackingMPC} is employed to solve the specified control problem. The following parametric OCP is used to generated the MPC feedback law:
\begin{subequations} \label{eq:LMPC_OCP}
\begin{alignat}{2}
\underset{(\xi,\mu)}{\mathrm{min}}& &&||\xi_N - \bar x_v||_P^2 + \sum_{i=0}^{N-1} ||\xi_i - \bar x_v||_Q^2 + ||\mu_i - \bar u_v||_R^2\\
\mathrm{s.t.}& ~ &&~\xi_0 = x, \label{eq:ocp_cstr1} \\
& &&~\xi_{i+1} = A\xi_i + B \mu_i, ~~~ i \in \intrng{0}{N-1},\\
& &&~C \xi_i + D \mu_i \in \mc{Y},\label{eq:ocp_cstr2} ~~~~~~ i \in \intrng{0}{N-1},\\
& &&\qquad(\xi_N,v) \in \mc{T}, \label{eq:ocp_cstr3}
\end{alignat}
\end{subequations}
where $N \in \naturals$ is the prediction horizon, $\xi = (\xi_0,\dots,\xi_{N})$ and $\mu = (\mu_0,\dots,\mu_{N-1})$ are the predicted state and control sequences, $P \in \reals^{n\times n}, Q \in \reals^{n\times n}, R \in \reals^{n_u\times n_u}$ are weighting matrices, and $\mc{T} \subseteq \reals^{n+n_v}$ is a terminal set. The current state $x$ and reference command $v$ are parameters in this OCP. We use the notation $\mc{P}_N(x,v)$ to refer to problem \eqref{eq:LMPC_OCP} specified with fixed parameters $(x,v)\in\reals^{n+n_v}$. The following assumption ensures that \eqref{eq:LMPC_OCP}  can be used to generate a stabilizing feedback law.

\begin{ass} \label{ass:QR}
   The cost matrices satisfy $Q \succ 0$ and $R \succ0$.
\end{ass}
\begin{rem}
     We assume $Q \succ 0$, as opposed to $Q \succeq 0$, in order to invoke a stability result  \cite[Theorem 13.1]{borelli_BBMBook} for approximate MPC.
\end{rem}

Given $Q$ and $R$ specified according to Assumption~\ref{ass:QR}, let $P$ be positive-definite the solution to 
\begin{equation}
    P = Q+A^TPA-A^TPBK,
\end{equation}
where $K$ is the Linear Quadratic Regulator (LQR) gain 
\begin{equation} \label{eq:LQRGain}
    K = (R+B^TPB)^{-1}(B^T PA).
\end{equation}
Further, let $\mc{T}=\mc{O}_{\infty}$, where $\mc{O}_{\infty} \subseteq \reals^{n+n_v}$ is the maximum constraint admissible set for the closed-loop system under  LQR feedback \cite{gilbert1991_maxAdmissSets}. That is, $\mc{O}_{\infty}$ is the maximal set of pairs $(x,v)$ such that if LQR is applied to \eqref{eq:LTI_System} with $x_0 = x$ and a constant reference $v$, then  the constraint \eqref{eq:constraint} is satisfied.

Since $\mc{T}=\mc{O}_{\infty}$ is polyhedral under Assumption~\ref{ass:AB-Stab} \cite{gilbert1991_maxAdmissSets}, then $\mc{P}_N(x,v)$ can be written in a condensed form:
\begin{subequations} \label{eq:Condensed_LMPC_OCP}
\begin{alignat}{2} 
\underset{\mu}{\mathrm{min}}&\quad \frac12&& \mu^T H \mu + \mu^T W\theta\\
\mathrm{s.t.}& && M \mu + L \theta + b \geq 0 ,
\end{alignat}
\end{subequations}
where $\theta = (x,v)$ and $H \succ 0, W, M, L, b$ are defined in \cite{Skibik2021_feasibilityGov} as a function of the problem data in \eqref{eq:LMPC_OCP}. The set of feasible parameters for \eqref{eq:Condensed_LMPC_OCP} is 
\begin{equation}
    \Gamma_N =  \{ \theta \in \reals^{n+n_v} ~|~ \exists \mu :  M \mu + L \theta + b \geq 0  \},
\end{equation}
which is equivalent to the $N$-step backwards reachable set to $\mc{O}_\infty$. Note that both $\mc{O}_\infty$ and $\Gamma_N$ are polyhedral sets with representations that can be computed offline  \cite{gilbert1991_maxAdmissSets,Skibik2021_feasibilityGov}. The following set-valued maps are defined for convenience
\begin{align}
    \mathcal{O}_\infty(v) &=  \{ x \in \reals^{n} ~|~ (x,v) \in \mathcal{O}_\infty \}, \\
    \Gamma_N(v) &=  \{ x \in \reals^{n} ~|~ (x,v) \in \Gamma_N \}.
\end{align}

The MPC feedback policy is defined by
\begin{equation} \label{eq:MPC_Feedback_Law}
     u^* (x,v) = \Xi \mu^*(x,v),
\end{equation}
where $\mu^*(x,v)$ is the optimal solution to $\mc{P}_N(x,v)$ and $\Xi = [I_{n_u} \ 0 \ ... \ 0]$ is a matrix that selects the first control input. The following theorem details the stability and convergence properties of the closed-loop MPC system for a constant reference $v$.

\begin{thm} \label{thm:stabilityOfMPC}
(\hspace{-1sp}\cite[Theorem 4.4.2]{goodwin_ConstrainedControl}, \cite[Theorem 1]{Skibik2021_feasibilityGov}) Let Assumptions~\ref{ass:AB-Stab}-\ref{ass:QR} hold and consider the closed-loop system
\begin{equation} \label{eq:closedLoopMPCDynamics}
    x_{k+1} = A x_k + B u^*(x_k,v),
\end{equation}
starting from an initial condition $x_0$ with a constant reference command $v$. Then for all $(x_0,v) \in \Gamma_N$, all solutions satisfy: $(x_k,v) \in \Gamma_N$, $\forall k \in \posints$;  $y_k \in \mc{Y}$, $\forall k \in \posints$; and $\lim_{k \to \infty} x_k = \bar{x}_{v}$. If, in addition, $v \in \mathrm{Int} \ \mc{V}$ then $\bar{x}_{v}$ is asymptotically stable, where $\mathcal{V} = \{ v  ~|~ G_y v \in \mathcal{Y} \}$ is the set of constraint admissible references and $G_y = C G_x + D G_u$.
\end{thm}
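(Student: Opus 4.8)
The plan is to run the classical terminal-set argument for tracking MPC, specialized to the present terminal ingredients — the LQR terminal weight $P$ and the terminal set $\mc{T}=\mc{O}_\infty$ — and to read off all the conclusions from monotonicity of the optimal value function of $\mc{P}_N(x,v)$, which I denote $V_N(x,v)$. Concretely: recursive feasibility and the pointwise constraint \eqref{eq:constraint} follow from a shift-and-append construction; convergence $x_k\to\bar x_v$ follows from a strict decrease of $V_N$ along \eqref{eq:closedLoopMPCDynamics}; and asymptotic stability when $v\in\mathrm{Int}\,\mc{V}$ follows by sandwiching $V_N(\cdot,v)$ between two positive-definite quadratics near $\bar x_v$. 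Since the statement is quoted from \cite[Theorem 4.4.2]{goodwin_ConstrainedControl} and \cite[Theorem 1]{Skibik2021_feasibilityGov}, the task is really to organize these standard steps around the specific data of \eqref{eq:LMPC_OCP}.

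\textbf{Recursive feasibility and constraints.} Fix $(x_k,v)\in\Gamma_N$ and let $(\xi^*,\mu^*)$ solve $\mc{P}_N(x_k,v)$, so that $x_{k+1}=\xi_1^*$. Build the candidate $\tilde\xi_i=\xi_{i+1}^*$, $\tilde\mu_i=\mu_{i+1}^*$ for $i\in\intrng{0}{N-2}$, and append the LQR move about the target equilibrium, $\tilde\mu_{N-1}=\bar u_v-K(\xi_N^*-\bar x_v)$, $\tilde\xi_N=A\xi_N^*+B\tilde\mu_{N-1}$. Because $(\xi_N^*,v)\in\mc{O}_\infty$ and, by construction, $\mc{O}_\infty$ is admissible and positively invariant for the LQR-controlled plant at constant reference $v$, the appended stage satisfies \eqref{eq:ocp_cstr2} and $(\tilde\xi_N,v)\in\mc{O}_\infty$; hence $(\tilde\xi,\tilde\mu)$ is feasible for $\mc{P}_N(x_{k+1},v)$, so $(x_{k+1},v)\in\Gamma_N$. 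Induction on $k$ gives $(x_k,v)\in\Gamma_N$ for all $k\in\posints$, and since the first control of any feasible solution obeys \eqref{eq:ocp_cstr2}, $y_k\in\mc{Y}$ for all $k\in\posints$.

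\textbf{Value-function descent, convergence, and stability.} Evaluating the objective of $\mc{P}_N(x_{k+1},v)$ at $(\tilde\xi,\tilde\mu)$ and telescoping, the stage-$0$ block $\|\xi_0^*-\bar x_v\|_Q^2+\|\mu_0^*-\bar u_v\|_R^2$ drops out, while with $e:=\xi_N^*-\bar x_v$ the exchange of terminal blocks contributes $e^T[(A-BK)^TP(A-BK)+Q+K^TRK-P]e$, which is zero because $P$ satisfies $P=Q+A^TPA-A^TPBK$, equivalently $P=(A-BK)^TP(A-BK)+Q+K^TRK$ (and $\bar x_v=A\bar x_v+B\bar u_v$ gives $\tilde\xi_N-\bar x_v=(A-BK)e$). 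Hence $V_N(x_{k+1},v)\le V_N(x_k,v)-\|\xi_0^*-\bar x_v\|_Q^2\le V_N(x_k,v)-\lambda_-(Q)\|x_k-\bar x_v\|^2$; summing over $k$ and using $V_N\ge0$ yields $\sum_k\|x_k-\bar x_v\|^2<\infty$, so $\lim_{k\to\infty}x_k=\bar x_v$. For the final claim, $v\in\mathrm{Int}\,\mc{V}$ makes $G_y v\in\mathrm{Int}\,\mc{Y}$, so $\bar x_v$ is a strictly feasible equilibrium and, by continuity of the LQR trajectory in its initial state, $\bar x_v\in\mathrm{Int}\,\mc{O}_\infty(v)\subseteq\mathrm{Int}\,\Gamma_N(v)$; on that neighborhood the pure LQR sequence at reference $v$ is feasible for $\mc{P}_N(\cdot,v)$ and, telescoping the same Riccati identity, has cost $\|x-\bar x_v\|_P^2$, so $\lambda_-(Q)\|x-\bar x_v\|^2\le V_N(x,v)\le\lambda_+(P)\|x-\bar x_v\|^2$ near $\bar x_v$ with $V_N(\bar x_v,v)=0$. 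Together with the descent, $V_N(\cdot,v)$ is a local Lyapunov function, so $\bar x_v$ is asymptotically stable.

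The main obstacle is not any single inequality but the terminal-ingredient bookkeeping: one must confirm that $P$ from the Riccati equation and $\mc{T}=\mc{O}_\infty$ form a \emph{valid} terminal pair — $P$ a local control-Lyapunov function whose decrease under $K$ exactly cancels the stage cost, and $\mc{O}_\infty$ admissible, positively invariant, polyhedral, and (for the stability claim) a neighborhood of $\bar x_v$ whenever $v\in\mathrm{Int}\,\mc{V}$. These are precisely the properties established in \cite{gilbert1991_maxAdmissSets,Skibik2021_feasibilityGov}, so once they are invoked the argument above goes through.
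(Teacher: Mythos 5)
Your argument is correct: the paper itself offers no proof of this theorem, simply citing \cite[Theorem 4.4.2]{goodwin_ConstrainedControl} and \cite[Theorem 1]{Skibik2021_feasibilityGov}, and your shift-and-append feasibility argument, Riccati-identity value-function descent, and quadratic sandwich of $V_N(\cdot,v)$ near $\bar x_v$ is precisely the standard terminal-ingredient proof those references rely on. The only ingredient you correctly flag as needing external justification is that $\mc{O}_\infty(v)$ is positively invariant, constraint-admissible, and a neighborhood of $\bar x_v$ when $v\in\mathrm{Int}\,\mc{V}$, which is exactly what \cite{gilbert1991_maxAdmissSets,Skibik2021_feasibilityGov} supply.
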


\begin{ass} \label{ass:referenceInV}
    The target reference satisfies $r \in \mathrm{Int} \ \mc{V}$.
\end{ass}

Clearly, the feedback policy \eqref{eq:MPC_Feedback_Law} is sufficient to solve the given control problem. However, computing the solution of \eqref{eq:Condensed_LMPC_OCP} at each timestep may be  difficult in applications with fast sampling rates and/or limited computing power. To this end, a computationally governed optimization approach for solving \eqref{eq:Condensed_LMPC_OCP} is proposed in this paper. 

\section{A Log-domain Interior-point Method for Quadratic Programming}
\label{sec:LDIPM}
In this section, a brief overview of the log-domain interior-point method (LDIPM) from \cite{Permenter2021_logspaceIP} is presented.  To avoid dealing with the extra parameters in \eqref{eq:Condensed_LMPC_OCP}, we consider a generic convex quadratic program (QP) of the form:
\begin{subequations} \label{eq:genericQP}
\begin{alignat}{2} 
\underset{z}{\mathrm{min.}}&\quad \frac12&& z^T H z + c^T z\\
\mathrm{s.t.}& && Az+b \geq 0,
\end{alignat}
\end{subequations}
where $z \in \reals^p$ is the vector of decision variables, and $A \in \reals^{m \times p}$, $b \in \reals^m$, $H \in \reals^{p \times p}$, and $c \in \reals^p$ are the problem data. It is assumed that $H \succeq 0$, there exists $z \in \reals^p$ satisfying $Az +b > 0$ for all $\beta \in \reals$, the sublevel set $\{ z ~|~ \frac{1}{2} z^T H z + c^T z \leq \beta, Az + b \geq 0 \}$ is bounded, and $A^T A + H \succ 0$.

Consider the following \textit{central-path} equations for \eqref{eq:genericQP}
\begin{subequations} \label{eq:centralPath}
\begin{align}
 &A^T \lambda = Hz + c, \ \  s = Az + b, \\
 &\lambda \geq 0, \ s \geq 0, \ \   s_i \lambda_i = \eta, \ \forall i \in \naturals_{[1,m]}, \label{eq:nonNegAndCompSlack}
\end{align}
\end{subequations}
where $\eta > 0$ is a fixed homotopy parameter, $s \in \reals^m$ is the constraint slack, and $\lambda \in \reals^m$ is the vector of dual variables. Note that when $\eta =0$, these equations reduce to the Karush-Kuhn-Tucker (KKT) optimality conditions for \eqref{eq:genericQP}.  Next, consider the following logarithmic change-of-variables. Let $\gamma \in \reals^m$ and define $\lambda = \sqrt{\eta} e^\gamma$ and $s = \sqrt{\eta} e^{-\gamma}$, such that the \textit{log-domain central-path} equations are 
\begin{equation} \label{eq:logspaceCentralPath}
    \sqrt{\eta} A^T e^{\gamma} = Hz + c, \ \ \ \sqrt{\eta} e^{-\gamma} = Az + b.
\end{equation}
Note that the conditions in \eqref{eq:nonNegAndCompSlack} are  automatically satisfied by the change-of-variables. We define the  \emph{central-path} as the map $\eta \mapsto (z,\gamma)$ such that $(z,\gamma,\eta)$ satisfy \eqref{eq:logspaceCentralPath}. 

The LDIPM solves \eqref{eq:genericQP} by applying Newton's method to \eqref{eq:logspaceCentralPath} with a decreasing sequence of $\eta$. The Newton direction  $d = d(\gamma,\eta) \in \reals^m$ of the LDIPM satisfies 
\begin{subequations} \label{eq:newtonDirectDef}
\begin{align}
  \sqrt{\eta} A^T ( e^{\gamma} + e^{\gamma} \circ d) &= Hz + c, \label{eq:newtonDirectDef_1} \\
  \sqrt{\eta}( e^{-\gamma} - e^{-\gamma} \circ d) &= Az + b. \label{eq:newtonDirectDef_2}
\end{align}
\end{subequations}
The following theorem establishes uniqueness of $d(\gamma,\eta)$ and $z(\gamma,\eta)$, and provides a method of computing both.

\begin{thm} \label{thm:NewtonStep}
    (\hspace{-1sp}\cite[Theorem 2.1]{Permenter2021_logspaceIP}) For all $\gamma \in \reals^m$ and $\eta > 0$, the Newton direction $d = d(\gamma,\eta)$ and the decision variable $z=z(\gamma,\eta)$ satisfy 
    \begin{align}
        d &= \ones - \frac{1}{\sqrt{\eta}} e^{\gamma} \circ (Az + b), \label{eq:NewtonDirectionEquation} \\
        (A^T \Phi(\gamma) A +H) z &= 2 \sqrt{\eta} A^T e^\gamma - (c + A^T \Phi(\gamma) b), \label{eq:NewtonPrimalVar}
    \end{align}
    where $\ones \in \reals^m$ is a vector of ones and $\Phi(\gamma) = \mathrm{diag}(e^{2\gamma})$. Moreover, $A^T \Phi(\gamma) A +H \succ 0$.
\end{thm}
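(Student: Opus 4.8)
The plan is to regard the defining equations \eqref{eq:newtonDirectDef} as a system in the unknowns $(z,d)$ for fixed $\gamma$ and $\eta$, eliminate $d$ so as to obtain a single symmetric positive-definite linear system for $z$, and then recover $d$ from $z$; positive-definiteness of the coefficient matrix will simultaneously supply the claimed uniqueness.

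First I would manipulate \eqref{eq:newtonDirectDef_2} alone. Factoring out $e^{-\gamma}$ gives $\sqrt{\eta}\, e^{-\gamma}\circ(\ones - d) = Az+b$, and multiplying elementwise by $\tfrac{1}{\sqrt{\eta}}e^{\gamma}$ (legitimate and reversible since the exponential is strictly positive, with $e^{-\gamma}\circ e^{\gamma}=\ones$) yields $\ones - d = \tfrac{1}{\sqrt{\eta}}e^{\gamma}\circ(Az+b)$, which is exactly \eqref{eq:NewtonDirectionEquation}. Crucially this step is an equivalence: \eqref{eq:newtonDirectDef_2} holds if and only if \eqref{eq:NewtonDirectionEquation} holds.

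Next I would substitute the resulting expression for $d$ into \eqref{eq:newtonDirectDef_1}. Since $e^{\gamma}+e^{\gamma}\circ d = e^{\gamma}\circ(\ones+d)$ and, by \eqref{eq:NewtonDirectionEquation}, $\ones + d = 2\cdot\ones - \tfrac{1}{\sqrt{\eta}}e^{\gamma}\circ(Az+b)$, one gets $e^{\gamma}\circ(\ones+d) = 2e^{\gamma} - \tfrac{1}{\sqrt{\eta}}\,e^{2\gamma}\circ(Az+b) = 2e^{\gamma} - \tfrac{1}{\sqrt{\eta}}\,\Phi(\gamma)(Az+b)$, using $\Phi(\gamma)=\mathrm{diag}(e^{2\gamma})$. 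Hence the left-hand side of \eqref{eq:newtonDirectDef_1} equals $2\sqrt{\eta}\,A^T e^{\gamma} - A^T\Phi(\gamma)(Az+b)$; equating this with $Hz+c$ and collecting the terms in $z$ gives $(A^T\Phi(\gamma)A+H)z = 2\sqrt{\eta}\,A^T e^{\gamma} - (c+A^T\Phi(\gamma)b)$, which is \eqref{eq:NewtonPrimalVar}. Every manipulation here is again reversible, so the original system \eqref{eq:newtonDirectDef} is equivalent to the pair \eqref{eq:NewtonDirectionEquation}--\eqref{eq:NewtonPrimalVar}.

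Finally I would prove $A^T\Phi(\gamma)A+H\succ0$, which gives existence and uniqueness of $z$ solving \eqref{eq:NewtonPrimalVar}, and then $d$ is uniquely pinned down by \eqref{eq:NewtonDirectionEquation}; combined with the equivalence above, this yields the asserted uniqueness of $z(\gamma,\eta)$ and $d(\gamma,\eta)$. For the definiteness: $\Phi(\gamma)\succ0$ since its diagonal entries $e^{2\gamma_i}$ are positive, so $A^T\Phi(\gamma)A\succeq0$, and $H\succeq0$ by assumption. If $x^T(A^T\Phi(\gamma)A+H)x=0$, then each nonnegative summand vanishes, so $x^THx=0$ and $x^TA^T\Phi(\gamma)Ax=\sum_i e^{2\gamma_i}(Ax)_i^2=0$, the latter forcing $Ax=0$; therefore $x^T(A^TA+H)x=0$, and the standing assumption $A^TA+H\succ0$ gives $x=0$. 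I do not anticipate a real obstacle; the only points requiring care are making explicit that the elimination of $d$ is a genuine equivalence (so the unique solution of \eqref{eq:NewtonPrimalVar} really is the unique solution of \eqref{eq:newtonDirectDef}) and that the strict positivity of the entries of $\Phi(\gamma)$ is precisely what allows passing from $A^T\Phi(\gamma)Ax=0$ to $Ax=0$.
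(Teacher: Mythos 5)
Your proof is correct. The paper itself gives no proof of this theorem --- it is imported verbatim from \cite[Theorem 2.1]{Permenter2021_logspaceIP} --- so there is nothing in this document to compare against, but your derivation is the natural one and almost certainly mirrors the cited source: eliminate $d$ from \eqref{eq:newtonDirectDef_2} to get \eqref{eq:NewtonDirectionEquation}, substitute into \eqref{eq:newtonDirectDef_1} to get the normal-equations form \eqref{eq:NewtonPrimalVar}, and use the standing assumption $A^TA+H\succ0$ together with strict positivity of the entries of $\Phi(\gamma)$ to conclude $A^T\Phi(\gamma)A+H\succ0$ and hence uniqueness. Your attention to the reversibility of each elimination step, and to the fact that $\sum_i e^{2\gamma_i}(Ax)_i^2=0$ forces $Ax=0$ only because the weights are strictly positive, covers exactly the points where such an argument could otherwise be sloppy.
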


Given $\eta > 0$, iterates generated by the update rule
\begin{align}
    \gamma_{i+1} &= \gamma_i + \frac{1}{\alpha_i} d(\gamma_i, \eta), \\
    \alpha_i &= \max \left \{ 1,   \| d(\gamma_i, \eta) \|_\infty^2  \right \},
\end{align}
are globally convergent to the central-path point $(z,\gamma,\eta)$  \cite{Permenter2021_logspaceIP}. The following lemma describes conditions under which  iterates are primal-dual feasible with bounded suboptimality. 

\begin{lem} \label{lem:primalDualFeas}
  (\hspace{-1sp}\cite[Lemma 3.3]{Permenter2021_logspaceIP}) For $\eta > 0$, let $d = d(\gamma,\eta)$ and $z = z(\gamma,\eta)$. Let $\lambda = \sqrt{\eta}(e^\gamma + e^\gamma \circ d)$ and $s = \sqrt{\eta}(e^{-\gamma} - e^{-\gamma} \circ d)$. If $\| d \|_\infty \leq 1$, then $(z,s,\lambda)$ satisfy the primal-dual feasibility conditions
  \begin{equation*}
      Az + b = s, \  A^T \lambda = Hz + c, \  \lambda \geq 0, \ s \geq 0.
  \end{equation*}
  Further, $\| s \circ \lambda \|_1 = \eta (m - \| d \|^2)$.
\end{lem}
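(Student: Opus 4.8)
The plan is to verify each of the four feasibility conditions directly, reading most of them off the definition of the Newton direction. First I would note that equations \eqref{eq:newtonDirectDef_1} and \eqref{eq:newtonDirectDef_2}, which \emph{define} $d = d(\gamma,\eta)$ together with the associated decision variable $z = z(\gamma,\eta)$, become exactly $A^T \lambda = Hz + c$ and $Az + b = s$ once one substitutes $\lambda = \sqrt{\eta}(e^{\gamma} + e^{\gamma}\circ d)$ and $s = \sqrt{\eta}(e^{-\gamma} - e^{-\gamma}\circ d)$. So the two equality conditions hold by construction, with no use of the hypothesis $\|d\|_\infty \leq 1$; that hypothesis is needed only for the sign conditions.

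Next I would establish nonnegativity by factoring $\lambda = \sqrt{\eta}\, e^{\gamma}\circ(\ones + d)$ and $s = \sqrt{\eta}\, e^{-\gamma}\circ(\ones - d)$. Since $\sqrt{\eta} > 0$ and the vectors $e^{\gamma}$ and $e^{-\gamma}$ are elementwise strictly positive, the signs of $\lambda$ and $s$ are governed entirely by $\ones + d$ and $\ones - d$. The assumption $\|d\|_\infty \leq 1$ gives $-1 \leq d_i \leq 1$ for every $i \in \naturals_{[1,m]}$, hence $\ones + d \geq 0$ and $\ones - d \geq 0$ componentwise, and therefore $\lambda \geq 0$ and $s \geq 0$.

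Finally, for the complementarity-gap identity I would compute componentwise, using $e^{-\gamma_i} e^{\gamma_i} = 1$,
\begin{equation*}
 s_i \lambda_i = \eta\, (1 - d_i)(1 + d_i) = \eta(1 - d_i^2).
\end{equation*}
Because $d_i^2 \leq 1$ under the hypothesis, each $s_i\lambda_i$ is nonnegative, so its absolute value equals itself and $\| s \circ \lambda \|_1 = \sum_{i=1}^{m} s_i \lambda_i = \eta \sum_{i=1}^{m}(1 - d_i^2) = \eta\,(m - \|d\|^2)$. I do not expect a genuine obstacle here: the only points worth care are recognizing that the two equalities are baked into the definition of the Newton step rather than something to be derived, and that $\|d\|_\infty \leq 1$ is invoked twice — once to fix the sign of $\ones \pm d$, and once to guarantee $1 - d_i^2 \geq 0$ so that the $1$-norm of $s\circ\lambda$ coincides with the plain sum.
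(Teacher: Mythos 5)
Your proof is correct and complete: the two equalities are indeed immediate from the defining equations \eqref{eq:newtonDirectDef_1}--\eqref{eq:newtonDirectDef_2} of the Newton step, the sign conditions follow from factoring out the positive vectors $e^{\pm\gamma}$ and using $\|d\|_\infty \leq 1$, and the identity $s_i\lambda_i = \eta(1-d_i^2)$ gives the gap formula once nonnegativity lets you drop the absolute values. The paper itself offers no proof of this lemma --- it is simply cited from \cite{Permenter2021_logspaceIP} --- so there is nothing to diverge from; your direct verification is the standard argument and correctly identifies the two distinct places where the hypothesis $\|d\|_\infty \leq 1$ is used.
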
 

\begin{algorithm}
\caption{Longstep($H,c,A,b,\gamma_0,\eta_0$,$\eta_f$)} \label{algo:longstep}
\begin{algorithmic}[1] 
 \renewcommand{\algorithmicrequire}{\textbf{Input:}}
 \renewcommand{\algorithmicensure}{\textbf{Output:}}
 \State $\gamma \leftarrow \gamma_0$, $\eta \leftarrow \eta_0$
 \While{$\eta > \eta_f$ \textbf{or} $\| d (\gamma,\eta) \|_\infty > 1$}
 \State $\eta \leftarrow \min \{ \eta, \inf \{ \eta > 0 : \|d(\gamma,\eta) \|_\infty \leq 1 \}     \}$
 \State $\alpha \leftarrow \max \{ 1,   \| d(\gamma, \eta) \|_\infty^2  \} $
 \State $ \gamma \leftarrow \gamma + \frac{1}{\alpha} d(\gamma, \eta)$
 \EndWhile \\
  $z \leftarrow (A^T \Phi(\gamma) A +H)^{-1} [ 2 \sqrt{\eta} A^T e^\gamma - (c + A^T \Phi(\gamma) b)],$ \\
 \Return $(z,\gamma,\eta)$
 \end{algorithmic}
 \end{algorithm}

Lemma~\ref{lem:primalDualFeas} motivates the longstep procedure described in Algorithm~\ref{algo:longstep}. At each iteration, the longstep procedure seeks to reduce $\eta$ by computing
\begin{equation} \label{eq:etaStar}
    \eta^* = \inf \{ \eta > 0 ~|~ \|d(\gamma,\eta) \|_\infty \leq 1 \},
\end{equation}
where we define $\eta^* = \infty$ when the set in \eqref{eq:etaStar} is empty. When $\eta^* < \infty$ is found, Algorithm~\ref{algo:longstep} provides an update that ensures $(z,\gamma,\eta)$ is primal-dual feasible and within a neighborhood of the central-path. Note that $\eta^*$ can be computed in ${O}(m)$ time by iterating through $2m$ linear inequalities \cite[Section 3.2.1]{Permenter2021_logspaceIP}. Moreover, the following theorem shows that Algorithm~\ref{algo:longstep} terminates globally.

\begin{thm} \label{thm:convergenceOfLDIPM}
     (\hspace{-1sp}\cite[Theorem 3.2]{Permenter2021_logspaceIP})  For any input $(\gamma_0,\eta_0,\eta_f) \in \reals^m \times \reals_{>0} \times \reals_{>0}$,  Algorithm~\ref{algo:longstep} terminates and  returns $(z,\gamma,\eta)$ with $\eta \leq \eta_f$, $\| d(\gamma,\eta) \|_\infty  \leq 1$, and
     \begin{equation*}
         Az + b \geq 0, \hspace{0.5cm} \frac{1}{2} z^T W z + c^T z \leq V^* + m \eta,
     \end{equation*}
     where $V^*$ denotes the optimal value of the QP \eqref{eq:genericQP}.
\end{thm}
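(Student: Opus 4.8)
The plan is to split the statement into three parts---finite termination, the feasibility/proximity properties of the returned point, and the suboptimality bound---and to observe that, given Lemma~\ref{lem:primalDualFeas}, only termination requires real work.

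Suppose first that the \textbf{while} loop of Algorithm~\ref{algo:longstep} has exited. Its guard is the disjunction ``$\eta > \eta_f$ \textbf{or} $\|d(\gamma,\eta)\|_\infty > 1$'', so at exit we have both $\eta \leq \eta_f$ and $\|d(\gamma,\eta)\|_\infty \leq 1$. Lines~7--8 then set $z = z(\gamma,\eta)$ through \eqref{eq:NewtonPrimalVar}. Since $\|d(\gamma,\eta)\|_\infty \leq 1$, Lemma~\ref{lem:primalDualFeas} applies with $\lambda = \sqrt{\eta}(e^\gamma + e^\gamma\circ d)$ and $s = \sqrt{\eta}(e^{-\gamma} - e^{-\gamma}\circ d)$: the triple $(z,s,\lambda)$ satisfies $Az+b = s$, $A^T\lambda = Hz+c$, $\lambda \geq 0$, $s \geq 0$, and $\|s\circ\lambda\|_1 = \eta(m - \|d\|^2) \leq m\eta$. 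In particular $Az+b = s \geq 0$, which is the first conclusion.

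For the suboptimality bound I would invoke weak Lagrangian duality for the convex QP \eqref{eq:genericQP}. For $\lambda \geq 0$, the dual function $g(\lambda) = \inf_\zeta \big(\tfrac12 \zeta^T H \zeta + c^T\zeta - \lambda^T(A\zeta+b)\big)$ satisfies $g(\lambda) \leq V^*$. Because $H \succeq 0$ the bracketed objective is convex in $\zeta$, and the stationarity identity $Hz+c = A^T\lambda$ from Lemma~\ref{lem:primalDualFeas} shows $z$ attains the infimum, so $g(\lambda) = \tfrac12 z^T H z + c^T z - \lambda^T(Az+b) = \tfrac12 z^T H z + c^T z - s^T\lambda$. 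Since $s,\lambda \geq 0$ we have $s^T\lambda = \|s\circ\lambda\|_1$, and combining with $g(\lambda)\leq V^*$ yields $\tfrac12 z^T H z + c^T z \leq V^* + \|s\circ\lambda\|_1 \leq V^* + m\eta$, the second conclusion. (I read the $W$ in the theorem statement as the QP matrix $H$.)

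The crux, and the step I expect to be the main obstacle, is \textbf{termination}. Along the outer iterations two things must be controlled. First, whenever the guard holds because $\|d(\gamma,\eta)\|_\infty > 1$, Line~3 replaces $\eta$ by $\eta^*$ of \eqref{eq:etaStar}; one must show $\eta^*$ is finite and attained and that the damped update in Lines~4--5 restores $\|d\|_\infty \leq 1$. Finiteness of $\eta^*$ follows from the structure of the Newton step: by \eqref{eq:NewtonDirectionEquation} together with the affine-in-$\sqrt{\eta}$ dependence of $z(\gamma,\eta)$ implied by \eqref{eq:NewtonPrimalVar}, each component $d_i(\gamma,\eta)$ is affine in $1/\sqrt{\eta}$, so $\{\eta>0 : \|d(\gamma,\eta)\|_\infty \leq 1\}$ is an interval (whose infimum is computable from $2m$ linear inequalities), and the standing assumption that some $z$ satisfies $Az+b>0$ keeps it nonempty. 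Second, once $\|d\|_\infty \leq 1$ the loop continues only while $\eta > \eta_f$, so $\eta$ must be pushed below $\eta_f$ in finitely many steps, which needs a quantitative lower bound on the multiplicative decrease of $\eta$ per outer iteration. The clean route to both facts is to combine the global convergence of the fixed-$\eta$ iteration stated just before Lemma~\ref{lem:primalDualFeas} with the boundedness of the sublevel set assumed in \eqref{eq:genericQP}: the latter confines the iterates $\gamma_i$ to a compact set on which $\|d\|$ and the spacing of admissible $\eta$'s stay away from their degenerate values, producing a uniform decrease of $\eta$ and hence finitely many iterations. Turning this into explicit constants---essentially reproducing the argument of \cite[Theorem~3.2]{Permenter2021_logspaceIP}---is the genuinely delicate part; by comparison the feasibility and duality steps above are routine.
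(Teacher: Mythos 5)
The paper offers no proof of this theorem at all: it is imported verbatim as \cite[Theorem 3.2]{Permenter2021_logspaceIP}, so there is nothing in the paper to compare your argument against. On its own merits, your treatment of the two ``easy'' conclusions is correct and self-contained: the loop guard immediately gives $\eta \leq \eta_f$ and $\|d(\gamma,\eta)\|_\infty \leq 1$ at exit, Lemma~\ref{lem:primalDualFeas} then yields primal feasibility $Az+b = s \geq 0$ together with dual feasibility and $\|s\circ\lambda\|_1 = \eta(m-\|d\|^2) \leq m\eta$, and your weak-duality computation (using $Hz+c = A^T\lambda$ and convexity of the Lagrangian in $\zeta$ to evaluate $g(\lambda)$ exactly) correctly turns the complementarity gap into the bound $\tfrac12 z^THz + c^Tz \leq V^* + m\eta$. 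Your reading of $W$ as $H$ is also the right one; the $W$ in the displayed inequality is a typo carried over from the condensed MPC notation.

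The termination part, as you acknowledge, remains a sketch rather than a proof, and it contains one claim that is not justified as stated: you assert that the standing strict-feasibility assumption keeps the set $\{\eta > 0 : \|d(\gamma,\eta)\|_\infty \leq 1\}$ nonempty, but the paper explicitly defines $\eta^* = \infty$ when this set is empty, i.e., emptiness can occur for a given $\gamma$; Line~3 of Algorithm~\ref{algo:longstep} guards against it only via the $\min\{\eta,\cdot\}$, after which termination must instead be extracted from the global convergence of the fixed-$\eta$ damped iteration. The quantitative step you defer --- a uniform per-iteration decrease of $\eta$ obtained from compactness of the iterates --- is precisely the content of the cited Theorem~3.2, so your proposal does not replace the citation; it only re-derives the consequences that follow once termination is granted. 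Since the paper itself delegates the entire theorem to the reference, this is not a defect relative to the paper, but you should not present the termination paragraph as a proof.
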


\section{Application of the Log-Domain Interior Point Method to MPC}
\label{sec:warmStart}

We now consider the application of LDIPM to the MPC problem in Section~\ref{sec:problemSetting}. The following theorem and corollary demonstrate that the equilibrium $\bar{x}_v$ is asymptotically stable when MPC solutions are computed using Algorithm~\ref{algo:longstep} with a sufficiently small truncation tolerance $\eta_f$.
\begin{thm} \label{thm:stabUnderSubopt}
Let Assumptions~\ref{ass:AB-Stab}-\ref{ass:QR} hold. Let $\tilde{\mu}: (x,v) \mapsto \mu$  be a function that generates a feasible solution to $\mc{P}_N(x,v)$ satisfying
\begin{equation} \label{eq:costBoundForStab}
    J(x,v,\tilde{\mu}(x,v)) < V(x,v)  +  \| x - \bar{x}_v \|_Q^2,  
\end{equation}
for all $(x,v) \in \Gamma_N \backslash (\bar{x}_v,v)$, where $J$ and $V$ are the cost function and optimal value function of $\mc{P}_N(x,v)$. Then, consider the closed-loop dynamics
\begin{equation}  \label{eq:closedLoopSubopt}
    x_{k+1} = A x_k + B \Xi \tilde{\mu}(x_k,v),
\end{equation}
starting from an initial condition $x_0$ with a constant reference command $v$. Then for all $(x_0,v) \in \Gamma_N$, all solutions satisfy:  $(x_k,v) \in \Gamma_N$, $\forall k \in \posints$; $y_k \in \mc{Y}$, $\forall k \in \posints$; and $\lim_{k \to \infty} x_k = \bar{x}_{v}$. If, in addition, $v \in \mathrm{Int} \ \mc{V}$ then $\bar{x}_{v}$ is asymptotically stable.
\end{thm}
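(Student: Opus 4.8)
The plan is to follow the standard recursive-feasibility-plus-Lyapunov template for suboptimal MPC, mirroring the argument behind Theorem~\ref{thm:stabilityOfMPC} but with the optimal policy replaced by $\tilde{\mu}$ and with the suboptimality bound \eqref{eq:costBoundForStab} used to recover a Lyapunov decrease.

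\textbf{Step 1 (recursive feasibility and constraint satisfaction).} I would show by induction that $(x_k,v)\in\Gamma_N$ for all $k$. Assuming $(x_k,v)\in\Gamma_N$, $\tilde{\mu}$ returns a feasible state--control pair for $\mc P_N(x_k,v)$ whose first control is $u_k=\Xi\tilde{\mu}(x_k,v)$; write $\xi_N$ for its terminal state, so $(\xi_N,v)\in\mc T=\mc O_\infty$. Feasibility of \eqref{eq:ocp_cstr2} at $i=0$ gives $y_k=Cx_k+Du_k\in\mc Y$. Shifting this trajectory forward one step and appending the LQR control $\mu_N=\bar u_v-K(\xi_N-\bar x_v)$ yields a trajectory that starts at $x_{k+1}$, satisfies \eqref{eq:ocp_cstr2}, and terminates at $(\xi_{N+1},v)\in\mc O_\infty$ because $\mc O_\infty$ is invariant under LQR feedback; hence it is feasible for $\mc P_N(x_{k+1},v)$ and $(x_{k+1},v)\in\Gamma_N$. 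This closes the induction and establishes the first two claims.

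\textbf{Step 2 (Lyapunov inequality).} The cost of the shifted trajectory equals $J(x_k,v,\tilde{\mu}(x_k,v))-\|x_k-\bar x_v\|_Q^2-\|u_k-\bar u_v\|_R^2$, since the dropped terminal term $\|\xi_N-\bar x_v\|_P^2$ is exactly cancelled by $\|\xi_N-\bar x_v\|_Q^2+\|\mu_N-\bar u_v\|_R^2+\|\xi_{N+1}-\bar x_v\|_P^2$ via the Riccati identity $P=Q+K^TRK+(A-BK)^TP(A-BK)$. Optimality of $V$ at $x_{k+1}$ then gives $V(x_{k+1},v)\le J(x_k,v,\tilde{\mu}(x_k,v))-\|x_k-\bar x_v\|_Q^2-\|u_k-\bar u_v\|_R^2$, and combining with \eqref{eq:costBoundForStab} at $x_k$ yields, whenever $x_k\neq\bar x_v$, $V(x_{k+1},v)<V(x_k,v)-\|u_k-\bar u_v\|_R^2\le V(x_k,v)$. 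Thus $V(\cdot,v)$ is continuous, positive definite relative to $\bar x_v$ (bounded below by the $i=0$ stage cost $\|x-\bar x_v\|_Q^2$, using $Q\succ0$), and strictly decreasing along the closed loop away from $\bar x_v$, so its sublevel sets intersected with $\Gamma_N$ are compact and positively invariant.

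\textbf{Step 3 (convergence and asymptotic stability).} Feasibility for all $k$, the strict Lyapunov decrease of $V(\cdot,v)$, and $Q\succ0$ place us exactly in the hypotheses of the approximate-MPC stability result \cite[Theorem~13.1]{borelli_BBMBook} (cf.\ also \cite[Theorem~1]{Skibik2021_feasibilityGov}), which I would invoke to conclude $\lim_{k\to\infty}x_k=\bar x_v$. For the final claim, when $v\in\mathrm{Int}\,\mc V$ I would obtain Lyapunov stability of $\bar x_v$ directly from continuity and positive-definiteness of $V(\cdot,v)$ together with positive invariance of its sublevel sets: every $\varepsilon$-ball about $\bar x_v$ contains an invariant sublevel set $\{x:V(x,v)<c\}$, which in turn contains a $\delta$-ball; Lyapunov stability plus attractivity give asymptotic stability.

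\textbf{Main obstacle.} The delicate point is the attractivity claim in Step~3. Because the suboptimality slack in \eqref{eq:costBoundForStab} is precisely $\|x-\bar x_v\|_Q^2$, it cancels the state stage-cost term in the Lyapunov decrease, so the per-step drop in $V(\cdot,v)$ only dominates $\|u_k-\bar u_v\|_R^2$; a direct summation therefore only yields $u_k\to\bar u_v$, not $x_k\to\bar x_v$. Closing this gap requires the detectability-type argument underlying \cite[Theorem~13.1]{borelli_BBMBook} (exploiting $Q\succ0$ and compactness of the sublevel sets), which is why that theorem is invoked rather than a one-line Lyapunov argument. It is also worth checking that the technical hypotheses of that theorem — continuity and positive-definiteness of $V(\cdot,v)$ and compactness of its sublevel sets on $\Gamma_N$ — are met; these follow from $Q\succ0$, the polyhedrality of $\mc T=\mc O_\infty$ and $\Gamma_N$, and Assumption~\ref{ass:AB-Stab}.
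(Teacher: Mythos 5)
Your proof is correct and takes essentially the same route as the paper, whose proof is simply the one-line citation of Theorem~\ref{thm:stabilityOfMPC} and \cite[Theorem 13.1]{borelli_BBMBook}; you have filled in the standard shifted-trajectory recursive-feasibility argument and the resulting decrease $V(x_{k+1},v) < V(x_k,v) - \|u_k - \bar{u}_v\|_R^2$ that those references encapsulate. Your observation that this decrease only dominates the input stage cost, so that attractivity must come from the detectability-type argument in \cite[Theorem 13.1]{borelli_BBMBook} under $Q \succ 0$, is precisely why the paper invokes that theorem (cf.\ the remark following Assumption~\ref{ass:QR}).
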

\begin{proof}
    The result is a direct consequence of Theorem~\ref{thm:stabilityOfMPC} and \cite[Theorem 13.1]{borelli_BBMBook}. 
\end{proof}

\begin{cor} \label{cor:eta_f_select}
   Suppose Algorithm~\ref{algo:longstep} is applied to $\mc{P}_N(x_k,v)$ and let  $\eta_{f,k}$ represent the truncation tolerance $\eta_f$ specified in Algorithm~\ref{algo:longstep} at timestep $k$. Further, let $\eta_{f,k}$ satisfy
   \begin{equation} \label{eq:eta_f_select}
      m \eta_{f,k} <  \| x_k - \bar{x}_v \|_Q^2, \quad \forall k \geq 0, 
   \end{equation}
   where $m \in \naturals$ is the number of constraints in \eqref{eq:Condensed_LMPC_OCP} (i.e. $b \in \reals^m$).  Then, any output of Algorithm~\ref{algo:longstep} satisfies Theorem~\ref{thm:stabUnderSubopt}.
\end{cor}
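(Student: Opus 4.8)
The plan is to exhibit a map $\tilde\mu$ arising from Algorithm~\ref{algo:longstep} that meets the hypotheses of Theorem~\ref{thm:stabUnderSubopt}; the closed-loop conclusions then follow immediately.

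First I would fix $(x,v) \in \Gamma_N \backslash (\bar x_v, v)$ and let $\tilde\mu(x,v)$ denote the primal component $z$ returned by running Longstep on the condensed data $(H,\, W\theta,\, M,\, L\theta + b)$ of $\mc{P}_N(x,v)$, with $\theta = (x,v)$ and with a truncation tolerance $\eta_f$ chosen so that $m\eta_f < \|x - \bar x_v\|_Q^2$. Such a tolerance $\eta_f > 0$ exists: since $x \neq \bar x_v$ on $\Gamma_N \backslash (\bar x_v, v)$ and $Q \succ 0$ by Assumption~\ref{ass:QR}, the right-hand side is strictly positive. With this definition, the trajectory of \eqref{eq:closedLoopSubopt} generated by $\tilde\mu$ coincides with the one produced by applying Algorithm~\ref{algo:longstep} at each timestep with $\eta_{f,k}$ satisfying \eqref{eq:eta_f_select}, so proving that $\tilde\mu$ satisfies Theorem~\ref{thm:stabUnderSubopt} proves the corollary.

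Next I would verify the two requirements in Theorem~\ref{thm:stabUnderSubopt}. Feasibility is immediate from Theorem~\ref{thm:convergenceOfLDIPM}: the output satisfies $M\mu + L\theta + b \geq 0$, and since \eqref{eq:Condensed_LMPC_OCP} is the condensed reformulation of $\mc{P}_N(x,v)$, this $\mu$ corresponds to a feasible $(\xi,\mu)$ for $\mc{P}_N(x,v)$. For the cost bound I would use the standard identity obtained by condensing \eqref{eq:LMPC_OCP}, namely $J(x,v,\mu) = \tfrac12 \mu^T H \mu + \mu^T W\theta + g(\theta)$ for a function $g$ depending only on $\theta$, which gives $V(x,v) = V^* + g(\theta)$ with $V^*$ the optimal value of the condensed QP. Invoking the suboptimality estimate of Theorem~\ref{thm:convergenceOfLDIPM} (with $c = W\theta$), namely $\tfrac12 \mu^T H \mu + \mu^T W\theta \leq V^* + m\eta$, and adding $g(\theta)$ to both sides yields $J(x,v,\tilde\mu(x,v)) \leq V(x,v) + m\eta \leq V(x,v) + m\eta_f < V(x,v) + \|x - \bar x_v\|_Q^2$, using $\eta \leq \eta_f$ and the choice of $\eta_f$. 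This is exactly \eqref{eq:costBoundForStab}, and it holds for every $(x,v) \in \Gamma_N \backslash (\bar x_v, v)$, so Theorem~\ref{thm:stabUnderSubopt} applies.

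I expect the only real friction to be bookkeeping rather than mathematics: pinning down the constant offset $g(\theta)$ that connects the condensed QP value to $J$ and $V$, and making explicit that invoking Theorem~\ref{thm:stabUnderSubopt} is legitimate even though the tolerance $\eta_{f,k}$ varies along the trajectory (this is handled by the globally defined $\tilde\mu$ above). The one genuinely load-bearing hypothesis is $Q \succ 0$: it is what guarantees $\|x - \bar x_v\|_Q^2 > 0$ away from the target equilibrium and hence that a positive tolerance satisfying \eqref{eq:eta_f_select} always exists.
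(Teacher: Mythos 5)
Your proposal is correct and follows essentially the same route as the paper: feasibility and the suboptimality bound $J(x,v,\mu)-V(x,v)\leq m\eta_{f,k}$ both come from Theorem~\ref{thm:convergenceOfLDIPM}, and the strict inequality in \eqref{eq:eta_f_select} then yields \eqref{eq:costBoundForStab}. The paper's two-line proof simply suppresses the bookkeeping you make explicit (the parameter-dependent offset between the condensed QP value and $J$, $V$).
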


\begin{proof}
    Any solution ${\mu}$ generated by Algorithm~\ref{algo:longstep} is feasible and satisfies $J(x,v,{\mu}) - V(x,v) \leq m \eta_{f,k}$ by Theorem~\ref{thm:convergenceOfLDIPM}.   Hence,  \eqref{eq:costBoundForStab} is satisfied by the bound in \eqref{eq:eta_f_select}. 
\end{proof}

\begin{ass} \label{ass:etaf_tolerance}
    At all timesteps $k$,  the LDIPM truncation tolerance $\eta_{f,k}$ is chosen to satisfy \eqref{eq:eta_f_select}.  
\end{ass}


Now we consider how the LDIPM can be warm-started.  Let   $\xi_{k-1} = (\xi_{0,k-1},\dots,\xi_{N,k-1})$ and $\mu_{k-1} = (\mu_{0,k-1},\dots,\mu_{N-1,k-1})$ represent the  state and control sequences outputted by the LDIPM applied to $\mc{P}_N(x_{k-1},v)$. Denote by $\eta_{k-1} \leq \eta_{f,k-1}$ the tolerance that Algorithm~\ref{algo:longstep} truncated with at timestep $k-1$. The warm-started primal decision variable at timestep $k$ is generated by
\begin{equation} \label{eq:primalWS}
    \bar{\mu}_k = (\mu_{1,k-1} \dots, \mu_{N-1,k-1}, \bar{u}_v -K (\xi_{N,k-1} -\bar{x}_v)),
\end{equation}
where $K$ is the LQR gain in \eqref{eq:LQRGain}. Note that $\bar{\mu}_k$ is always a 
feasible solution candidate to $\mc{P}_N(x_k,v)$ as consequence of the MPC formulation \cite[Chapter 2]{Rawlings2009_MPCBook} and feasibility of $\mu_{k-1}$ at $k-1$. Thus, the corresponding slack variable satisfies $\bar{s}_k = M \bar{\mu}_k + L \theta_k + b  \geq 0$, where $\theta_k = (x_k,v)$. The log-domain variable used to initialize Algorithm~\ref{algo:longstep} is then generated by
\begin{align}
    \bar{\gamma}_k &= - \log\left( \max \left \{ \frac{\bar{s}_k}{\sqrt{\eta_{k-1}}}, 
      \epsilon_s \textbf{1} \right \} \right), \label{eq:warmStartGamma}
\end{align}
where $\epsilon_s > 0$ is a small tolerance, $\log$ operates elementwise, and $\max\{x,y\} = (\max(x_1,y_1),...,\max(x_n,y_n)) \in \reals^n$ for $x,y\in\reals^n$. The first argument in the $\max$ operator is a rearrangement of the  parameterization in \eqref{eq:logspaceCentralPath}, whereas the second argument is included to ensure that \eqref{eq:warmStartGamma} is defined when $\bar{s}_k$ has elements that are equal to zero.

Once initialized with  $\bar{\gamma}_k$,  Algorithm~\ref{algo:longstep} begins by  computing $\eta^*$ in \eqref{eq:etaStar}. That is, it finds the smallest $\eta$ that ensures the warm-start is within a neighborhood of the corresponding central-path point. In this sense, the warm-started solution only needs to be sufficiently close to \textit{some} location on the central-path. In contrast, other IPMs that are not designed with warm-starting in mind may be forced to start with a fixed (often large) initial value of $\eta$.

Note that the warm-start in \eqref{eq:warmStartGamma} may be a poor initial guess if a large reference change occurs. In such cases, the LDIPM may require several iterations to converge.  The following section introduces an approach for altering the reference command to avoid this problem.

\section{A Computational Governor for the Log-domain Interior-point Method}
\label{sec:compGov}

Next, consider an MPC feedback law defined by solving $\mc{P}_N(x_k,v_k)$ with a changing reference command defined by
\begin{equation} \label{eq:referenceParameterization}
    v_k = v_{k-1} + \kappa_k (r - v_{k-1}),
\end{equation}
where $r$ is the desired reference and $\kappa_k \in [0,1]$ is a time-varying parameter that dictates the rate at which $v_k$ converges to $r$. The parameterization in \eqref{eq:referenceParameterization} is commonly used in Scalar Reference Governors (SRGs), where $\kappa_k$ is maximized at each timestep subject to the constraint that $(x_k,v_k)$ stays inside of an invariant constraint admissible set \cite{Garone2017_RGSurvey}.

This section introduces a \emph{computational governor} (CG) that chooses $\kappa_k$ at each timestep subject to restrictions on the suboptimality and feasibility of the warm-start. In other words, the CG enforces a {computational constraint} in a similar approach to how SRGs enforce system constraints. To facilitate this development, note that the LDIPM Newton step can be parameterized by $\eta$ and $\kappa$ in the following manner.

\begin{prp} \label{prp:dParam}
Let the reference command $v=v_k$ in \eqref{eq:Condensed_LMPC_OCP} be defined by \eqref{eq:referenceParameterization} and let $d(\gamma,\eta,\kappa)$ represent the LDIPM Newton step for \eqref{eq:Condensed_LMPC_OCP} at a given $\gamma \in \reals^m$, $\eta > 0$, and $\kappa \in [0,1]$. Then, there exist $d_0(\gamma), d_1(\gamma), d_2(\gamma) \in \reals^m$ such that 
\begin{equation}
    d(\gamma,\eta,\kappa) = d_0(\gamma) + d_1(\gamma) \frac{1}{\sqrt{\eta}} + d_2(\gamma) \frac{1}{\sqrt{\eta}} \kappa. \label{eq:dBilinearForm}
\end{equation}
\end{prp}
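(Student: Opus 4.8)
The plan is to derive \eqref{eq:dBilinearForm} directly from the closed-form Newton-step expressions of Theorem~\ref{thm:NewtonStep}, specialized to the condensed MPC data in \eqref{eq:Condensed_LMPC_OCP}, by tracking how the parameter $\theta = (x_k,v_k)$ propagates through those formulas. In the notation of the generic QP \eqref{eq:genericQP}, problem \eqref{eq:Condensed_LMPC_OCP} corresponds to taking $A = M$, $c = W\theta$, constraint offset $L\theta + b$, and $H$ unchanged. The first step is to record that, by the reference parameterization \eqref{eq:referenceParameterization}, $v_k$ is affine in $\kappa$, hence so is $\theta$: writing $\theta_0 = (x_k,v_{k-1})$ and $\theta_\Delta = (0,\,r - v_{k-1})$ we have $\theta = \theta_0 + \kappa\,\theta_\Delta$.

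Next I would apply \eqref{eq:NewtonPrimalVar}. Its coefficient matrix $M^T\Phi(\gamma)M + H$ depends only on $\gamma$ and is invertible by Theorem~\ref{thm:NewtonStep}, while its right-hand side $2\sqrt{\eta}\,M^T e^\gamma - \big(W\theta + M^T\Phi(\gamma)(L\theta + b)\big)$ is affine in $\sqrt{\eta}$ and affine in $\theta$, with the $\sqrt{\eta}$-term carrying no $\theta$-dependence and the $\theta$-terms carrying no $\eta$-dependence. Substituting $\theta = \theta_0 + \kappa\,\theta_\Delta$ and collecting terms therefore yields
\[
 z(\gamma,\eta,\kappa) = \sqrt{\eta}\,z_a(\gamma) + z_b(\gamma) + \kappa\,z_c(\gamma),
\]
for suitable $z_a(\gamma), z_b(\gamma), z_c(\gamma) \in \reals^p$; explicitly $z_a(\gamma) = 2\big(M^T\Phi(\gamma)M + H\big)^{-1} M^T e^\gamma$, and $z_b, z_c$ collect the $\theta_0$- and $\theta_\Delta$-dependent contributions, respectively.

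Then I would substitute this decomposition into the Newton-direction formula \eqref{eq:NewtonDirectionEquation}, i.e. $d = \ones - \tfrac{1}{\sqrt{\eta}}\, e^\gamma \circ (Mz + L\theta + b)$. Expanding $Mz + L\theta + b$ produces a term proportional to $\sqrt{\eta}$ (from $Mz_a$), a $\kappa$-free constant term ($Mz_b + L\theta_0 + b$), and a term proportional to $\kappa$ ($Mz_c + L\theta_\Delta$); after multiplication by $\tfrac{1}{\sqrt{\eta}}$ these become, respectively, a $\gamma$-only term, a $\tfrac{1}{\sqrt{\eta}}$-term, and a $\tfrac{\kappa}{\sqrt{\eta}}$-term. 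Collecting coefficients gives exactly
\begin{align*}
 d_0(\gamma) &= \ones - e^\gamma \circ M z_a(\gamma),\\
 d_1(\gamma) &= -\,e^\gamma \circ \big(M z_b(\gamma) + L\theta_0 + b\big),\\
 d_2(\gamma) &= -\,e^\gamma \circ \big(M z_c(\gamma) + L\theta_\Delta\big),
\end{align*}
which is the claimed form \eqref{eq:dBilinearForm}.

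I do not expect a substantive obstacle: the computation is essentially bookkeeping once the mapping to \eqref{eq:genericQP} and the affine dependence $\theta = \theta_0 + \kappa\,\theta_\Delta$ are in place. The one point that deserves care is confirming that no other monomials in $(\eta,\kappa)$ can arise — in particular no bare $\kappa$ term, no $\kappa^2$ term, and no positive power of $\eta$. This follows because $z$ is an \emph{affine} function of $\theta$ (it solves a linear system whose only $\theta$-dependence is on the right-hand side), $\theta$ is affine in $\kappa$, and in \eqref{eq:NewtonDirectionEquation} every $\theta$-dependent quantity is multiplied by the single common factor $\tfrac{1}{\sqrt{\eta}}e^\gamma$; hence the $\kappa$-dependence of $d$ is confined to one $\tfrac{\kappa}{\sqrt{\eta}}$ term and its $\eta$-dependence to the powers $\eta^0$ and $\eta^{-1/2}$.
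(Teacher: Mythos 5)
Your proposal is correct and follows essentially the same route as the paper's proof: both specialize the closed-form Newton-step formulas \eqref{eq:NewtonDirectionEquation}--\eqref{eq:NewtonPrimalVar} of Theorem~\ref{thm:NewtonStep} to the condensed problem \eqref{eq:Condensed_LMPC_OCP} and track the affine dependence of $\mu$ (equivalently $\mu/\sqrt{\eta}$) and then of $d$ on $1/\sqrt{\eta}$ and $\kappa/\sqrt{\eta}$, using that $\theta$ is affine in $\kappa$ via \eqref{eq:referenceParameterization}. You simply carry out the bookkeeping more explicitly, producing closed-form expressions for $d_0$, $d_1$, $d_2$ where the paper only asserts affineness and defers computation to the Appendix.
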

\begin{proof}
   Note that when \eqref{eq:NewtonDirectionEquation} and \eqref{eq:NewtonPrimalVar} in Theorem~\ref{thm:NewtonStep} are written for the condensed OCP in \eqref{eq:Condensed_LMPC_OCP}, one obtains
        \begin{equation} \label{eq:dSpecified}
        d = \ones - \frac{1}{\sqrt{\eta}} e^{\gamma} \circ (M\mu  + L_x x + L_v v + b),  
    \end{equation} 
    \vspace{-\baselineskip}
    \begin{multline} \label{eq:xSpecified}
                (M^T \Phi(\gamma) M +H) \frac{1}{\sqrt{\eta}} \mu = 2 M^T e^\gamma - \frac{1}{\sqrt{\eta}} ( W_x x +  W_v v) \\ - \frac{1}{\sqrt{\eta}}  M^T \Phi(\gamma) (L_x x + L_v v + b), 
    \end{multline}
   where $L \theta = L_x x + L_v v$ and $W \theta = W_x x + W_v v$. Then by considering the parameterization of $v$ in \eqref{eq:referenceParameterization} and observing that  $\frac{1}{\sqrt{\eta}} \mu$ is affine with respect to $(\frac{1}{\sqrt{\eta}}, \frac{1}{\sqrt{\eta}} \kappa)$ and $d$ is affine with respect to $(\frac{1}{\sqrt{\eta}} \mu,\frac{1}{\sqrt{\eta}}, \frac{1}{\sqrt{\eta}} \kappa)$, it follows that $d$ is affine with respect to $(\frac{1}{\sqrt{\eta}}, \frac{1}{\sqrt{\eta}} \kappa)$.
 \end{proof}

\begin{rem}
     An efficient procedure for computing $d_0,d_1,d_2$ is provided in the Appendix.
\end{rem}

As a consequence of Proposition~\ref{prp:dParam}, the solution of the following optimization problem can be used to specify $(\eta,\kappa)$ prior to the start of Algorithm~\ref{algo:longstep} given a warm-start $\bar{\gamma}_k$
\begin{subequations} \label{eq:algorithmSearchOpt}
\begin{alignat}{2}
\underset{\eta,\kappa}{\mathrm{max}}& ~ &&~ \kappa -c \sqrt{\eta} \\
\mathrm{s.t.} & ~ &&~ \|d(\bar{\gamma}_k,\eta,\kappa) \|_\infty \leq 1, \label{eq:dInequality} \\
& ~ &&~ \eta  \in [\eta_{\mathrm{min}}, \eta_{\mathrm{max}}], \\
& ~ &&~\kappa \in [0,1], \label{eq:algorithmSearchOpt_bound1}
\end{alignat}
\end{subequations}
where $c \geq 0$ is a weighting parameter. When $c = 0$, solving \eqref{eq:algorithmSearchOpt} finds the largest reference step $\kappa^*$ that satisfies the primal-dual feasibility
conditions of  Lemma~\ref{lem:primalDualFeas}. Setting $c>0$ reduces
the reference step but improves optimality of the warm-start. 

Note that \eqref{eq:algorithmSearchOpt} can be expressed as a two-dimensional linear program (LP) in the variables $\sqrt{\eta}$ and $\kappa$ since \eqref{eq:dInequality} is equivalent to
\begin{equation*}
    (d_0 - \ones) \sqrt{\mu} + d_2\kappa \leq - d_1, \hspace{0.5cm}    -(d_0 + \ones) \sqrt{\mu} - d_2 \kappa \leq  d_1. 
\end{equation*}
To solve the resulting LP, we use Seidel's algorithm described in \cite{seidel1991_LPSolver}. We choose this algorithm because of its capability of solving low-dimensional LPs with $m$ constraints in   $O(m)$ time.



In summary, we propose the following procedure for efficiently computing MPC control actions at each timestep
\begin{enumerate}
        \item \textit{Warm-start}: Initialize $\bar{\gamma}_k$ according to \eqref{eq:warmStartGamma},
        \item \textit{Computational governor}: Set $(\eta_k,\kappa_k)$ to the solution of  \eqref{eq:algorithmSearchOpt} when feasible, otherwise set $(\eta_k,\kappa_k) = (\bar{\eta},0)$ where $\bar{\eta} \gg 1$ is a large constant value,
        \item \textit{LDIPM}:  Solve $\mc{P}_N(x_k, v_k)$ using Algorithm~\ref{algo:longstep} starting with a penalty parameter $\eta_k$ and log-space variable $\bar{\gamma}_k$.
\end{enumerate}

\section{Numerical Examples}
\label{sec:examples}

The  linear bicycle model in \cite[Section II-A]{Beal2013_CarModelForMPC} is used to demonstrate the efficacy of the computationally governed LDIPM. The system states are $x = (\beta,r,y)$ where $\beta$ is the ratio of lateral to longitudinal velocity, $r$ is the yaw rate, and $y$ is the lateral position. The control input is the steering angle $u = \delta$. The continuous-time state-space matrices $(A_c,B_c)$ are
\begin{align*}
    \left( \begin{bmatrix}
    \frac{-(C_{a f} + C_{a r})}{m U_x} &     \frac{-(a C_{a f} - b C_{a r})}{m U_x^2} - 1 & 0 \\
    \frac{-(a C_{a f} - b C_{a r})}{I_{zz}} &     \frac{-(a^2 C_{a f} + b^2 C_{a r})}{I_{zz} U_x} & 0 \\
    U_x & 0 & 0
    \end{bmatrix} ,
    \begin{bmatrix}
    \frac{ C_{a f} }{m U_x} \\ 
        \frac{a C_{a f} }{I_{zz}} \\
        0
    \end{bmatrix} \right)
\end{align*}
where  $U_x = 10$ is the constant longitudinal velocity, $m$ is the vehicle mass, $I_{zz}$ is the yaw moment of inertia, $C_{a f}$ and $ C_{a r}$ are the front and rear corning stiffness parameters, and $a$ and $b$ are the front and rear axle-CG distances. The values used for these parameters are those  in  \cite[Table I]{Beal2013_CarModelForMPC}. The system is controlled using the MPC feedback law generated by  solving $\mc{P}_N(x_k,v_k)$ using the three-step strategy described in Section~\ref{sec:compGov}. The MPC law is defined using a  sampling period of $T = 0.1$, a horizon of $N = 10$, and weight matrices $Q = \mathrm{diag}(1,1,10)$ and $R = 1$. The CG in \eqref{eq:algorithmSearchOpt} is executed with parameters $c = 1$,  $\eta_{\mathrm{min}} = 10^{-10}$, and  $\eta_{\mathrm{max}} = 10^{-2}$. The  system constraints are $\beta \in [-0.2,0.2]$, $r \in [-4,4]$, $y \in [-4,4]$, and $\delta \in [-1,1]$. 

Figure~\ref{fig:laneChangeComparison} compares the system response with and without the  CG. When no CG is used, several iterations are needed to converge to a solution when the setpoint changes at $t=0$ and $t=10$. Meanwhile, MPC solutions are obtained using only a single iteration per timestep when the CG is added to the closed-loop system. This is due to the CG maintaining a low value of $\eta^*$, thus Algorithm~\ref{algo:longstep} is initialized near the optimal solution at each timestep. Moreover, the CG introduces very little degradation in the settling time of the controller and the solution operates near the constraint boundary of $\delta$. 

To quantify the reduction in worst-case computation time when the CG is used, the experiments in Figure~\ref{fig:laneChangeComparison} were repeated 1000 times using a 2018 MacBook Pro running MATLAB R2019b. The worst-case computation times for Algorithm~\ref{algo:longstep} in Case (a) and the combination of Algorithms~\ref{algo:longstep} and solving \eqref{eq:algorithmSearchOpt} in Case (b) were recorded for each trial. Over the 1000 trials, the average of these times were (a) $10.8\pm0.9~\textrm{ms}$ and (b) $1.0\pm0.1~\textrm{ms}$. Thus, the worst-case computation time is reduced by approximately 90\% when the CG is used.

\begin{figure}
    \centering
    \subfigure[Step reference change]{\includegraphics[width=\columnwidth]{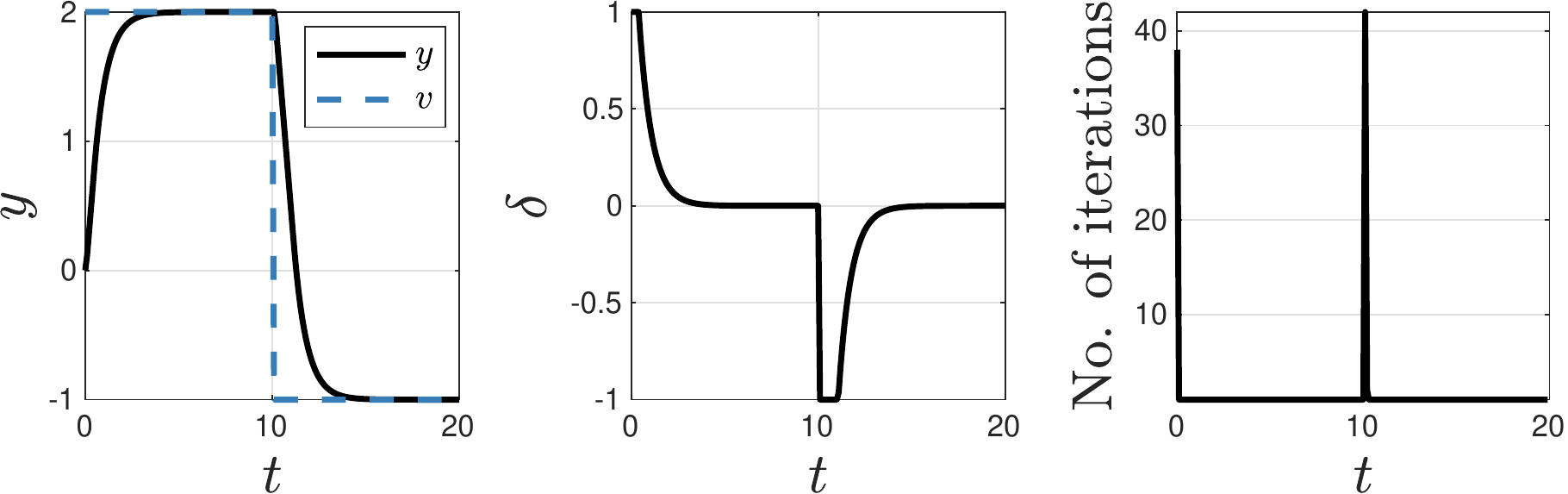}}
    
    \subfigure[Computationally governed reference change]{\includegraphics[width=\columnwidth]{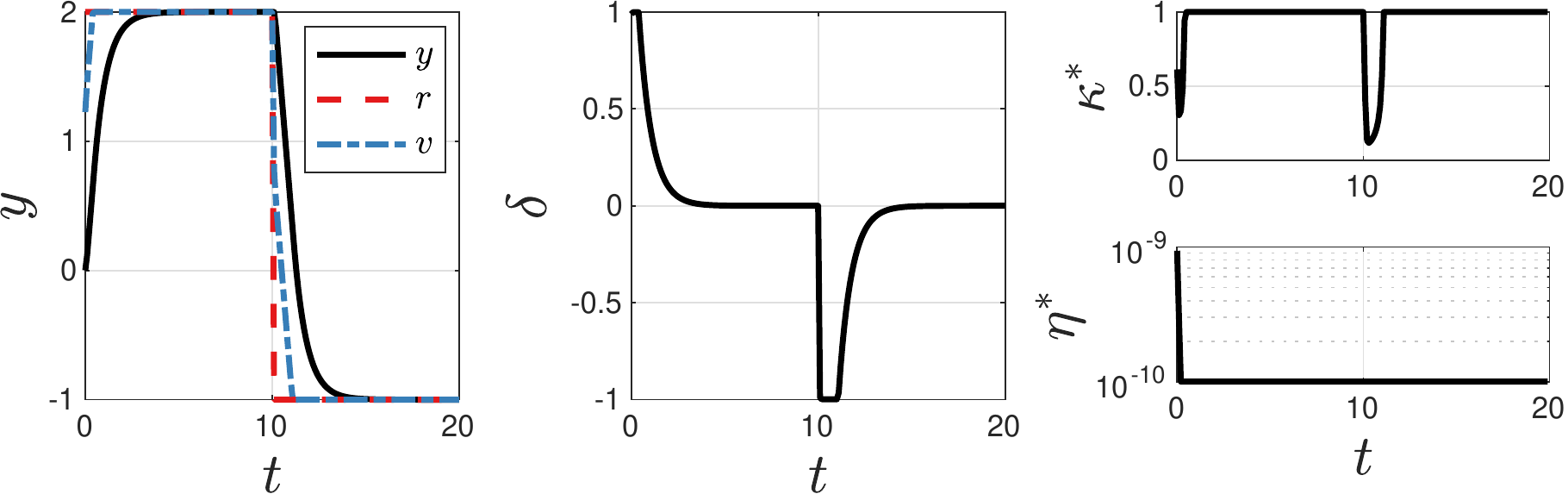}}
    \caption{Selecting $(\eta^*,\kappa^*)$ using the computational governor allows for MPC solutions to be computed using only 1 iteration per timestep. }
    \label{fig:laneChangeComparison}
\end{figure}

\section{Conclusions}

This paper introduced a computationally efficient method of implementing referencing tracking MPC. The  main novelty introduced in this paper is a computational governing scheme that alters the MPC reference command so that the  suboptimality of the optimizer initialization is bounded. Examples demonstrated that using this strategy can reduce the worst-case computation time of MPC by 90\%.   Future work will be devoted to deriving theoretical guarantees for this strategy such as finite-time convergence of the reference command and asymptotic stability of the desired equilibrium.

\section*{Appendix: Computation of $(d_0, d_1,d_2)$}

Let $\gamma \in \reals^m$  be a fixed log-domain variable  and define constants $\eta_1,\eta_2 > 0$, $\kappa_1,\kappa_2 \in [0,1]$, $\eta_1 \neq \eta_2$, $\kappa_1 \neq \kappa_2$. Consider the Newton direction equation \eqref{eq:newtonDirectDef_1}  for \eqref{eq:Condensed_LMPC_OCP} given each permutation of these parameters and a reference defined by $v' = v+\kappa(r-v)$, i.e.,
\begin{align*}
    &\sqrt{\eta_1} A^T e^\gamma \circ (\ones + \hat{d}_1) = H\mu_1 + W_x x  + W_v \left[ v + \kappa_1 (r - v) \right]  \\
 &\sqrt{\eta_1} A^T e^\gamma \circ (\ones +  \hat{d}_2) = H\mu_2 + W_x x + W_v \left[ v+ \kappa_2 (r - v) \right]    \\
 &\sqrt{\eta_2} A^T  e^\gamma \circ (\ones +  \hat{d}_3) = H\mu_3 + W_x x + W_v \left[ v + \kappa_1 (r - v) \right] \\
 &\sqrt{\eta_2} A^T e^\gamma \circ (\ones + \hat{d}_4) = H\mu_4 + W_x x + W_v \left[ v + \kappa_2 (r - v) \right]
\end{align*}
By combining these four equations and using some algebraic manipulation, one can  arrive at the equation,
\begin{equation}
            \sqrt{\eta} A^T e^\gamma \circ ( \ones + d ) = H  \mu
            +  W_x x  \\ + W_v [ v + \kappa   (r - v) ], \label{eq:bigNewtonEq_1}
\end{equation}
where 
\begin{alignat}{2}
    &\kappa = p \kappa_1 + (1-p)\kappa_2, && {\mu} = q \bar{\mu}_1 + (1-q) \bar{\mu}_2, \nonumber \\
    &d = q \bar{d}_1 + (1-q) \bar{d}_2,   &&   \frac{1}{\sqrt{\eta}} = q \frac{1}{\sqrt{\eta_1}} + (1-q) \frac{1}{\sqrt{\eta_2}}, \nonumber \\
    &\bar{d}_1 = p \hat{d}_1 + (1-p) \hat{d}_2, \hspace{0.75cm} &&\bar{\mu}_1 = p {\mu}_1 + (1-p) {\mu}_2,  \nonumber \\
    &\bar{d}_2 =  p \hat{d}_3 + (1-p) \hat{d}_4, \hspace{0.75cm}  &&  \bar{\mu}_2  = p {\mu}_3 + (1-p) {\mu}_4,  \label{eq:collectionOfEqs}
\end{alignat}
and where $p,q \in \reals$ are constants that we will specify later. Repeating the exact same procedure, but instead starting with the four equations for \eqref{eq:newtonDirectDef_2} gives
\begin{equation}
            \sqrt{\eta} e^{-\gamma} \circ \left( \ones - d \right) = M {\mu} + b +  L_x x \\ +  L_v[v_0 + \kappa  (r - v_0) ]. \label{eq:bigNewtonEq_2}
\end{equation}
By comparing  \eqref{eq:bigNewtonEq_1} and \eqref{eq:bigNewtonEq_2} to \eqref{eq:newtonDirectDef_1} and \eqref{eq:newtonDirectDef_2}, one can observe that  ${\mu}$ and $d$ defined in \eqref{eq:collectionOfEqs} are the primal variable and Newton step for parameters $\kappa$ and $\eta$  defined in \eqref{eq:collectionOfEqs}.

So, for a given $(\eta,\kappa)$ one can solve for the Newton step $d(\gamma,\eta,\kappa)$ by defining $p = a_0 + a_1 \kappa$ and $q = b_0 + b_1 \eta^{-1/2}$, where $a_0 = -\kappa_2(\kappa_1 - \kappa_2)^{-1}$, $a_1 = (\kappa_1 - \kappa_2)^{-1}$,  $b_0 = -\eta_2^{-1/2}(\eta^{-1/2}_1 - \eta^{-1/2}_2)^{-1}$, $b_1 = (\eta^{-1/2}_1 - \eta^{-1/2}_2)^{-1}$. Substituting  these expressions for $p$ and $q$ into the equation for $d$ in \eqref{eq:collectionOfEqs}  yields, after some algebraic manipulation is performed,
\begin{equation} \label{eq:dWithExtraTerm}
    d = d_0 + d_1 \frac{1}{\sqrt{\eta}} + d_2 \frac{\kappa}{\sqrt{\eta}} + d_3 \kappa,
\end{equation}
where
\begin{alignat}{2}
    & d_0 = b_0 c_1 + (1 - b_0)c_3,  \hspace{0.7cm} &&d_1 = b_1(c_1 - c_3), \nonumber \\
    & d_2 = b_1 (c_2 - c_4),  && d_3 = b_0 c_2 + (1 - b_0) c_4, \nonumber \\
    &c_1 = a_0 \hat{d}_1 + (1 - a_0) \hat{d}_2,  && c_2 = a_1(\hat{d}_1 - \hat{d}_2), \nonumber \\
    &c_3 = a_0 \hat{d}_3 + (1 - a_0) \hat{d}_4, && c_2 = a_1(\hat{d}_3 - \hat{d}_4). \label{eq:computingTheDs}
\end{alignat}
Further, we must have that $d_3 = 0$ according to  \eqref{eq:dBilinearForm}, so we can eliminate the need to directly compute one Newton direction (e.g. $\hat{d}_4$) by using the equality $d_3 = 0$ to obtain
\begin{equation} \label{eq:d4}
    \hat{d}_4 = b_0({1-b_0})^{-1}{(\hat{d}_1 - \hat{d}_2)} + \hat{d}_3.
\end{equation}

Thus, $d_0$, $d_1$, and $d_2$  in Proposition~\ref{prp:dParam} can be computed by defining  constants $\eta_1,\eta_2,\kappa_1,\kappa_2$ and computing the sampled Newton direction $\hat{d}_i$ for three of the four permutations by exploiting the common factorization.

\bibliography{bibl}

\end{document}